\documentclass{amsart}
\usepackage{graphicx} 


\usepackage{fullpage}

\usepackage{graphicx} 
\usepackage[utf8]{inputenc}
\usepackage[english]{babel}
\usepackage{csquotes}
\usepackage[T1]{fontenc}
\usepackage{amssymb}
\usepackage{mathtools}
\usepackage{amsfonts}
\usepackage{amsmath}
\usepackage{amsthm}
\usepackage{mathrsfs}
\usepackage{tikz}
\usepackage{tikz-cd}
\usepackage{changepage}
\usepackage{url}
\graphicspath{{images/}}
\usepackage{fancyhdr}
\usepackage{amsmath}
\usepackage{amssymb}
\usepackage{amsfonts}
\usepackage{amsthm}

\usepackage{hyperref}
\hypersetup{
	colorlinks=true,
	linkcolor=blue,
	citecolor=blue,
	urlcolor=blue,
	pdftitle={}
}


\DeclareMathOperator{\diam}{diam}
\DeclareMathOperator{\Isom}{Isom}



\usepackage[backend=biber,style=numeric]{biblatex}
\addbibresource{references.bib}


\newtheorem{theorem}{Theorem}  
\newtheorem{cor}[theorem]{Corollary}


\theoremstyle{plain}
\newtheorem{thm}{Theorem}
\newtheorem{lemma}[thm]{Lemma}
\newtheorem{corollary}[thm]{Corollary}

\newtheorem{prop}[thm]{Proposition}

\theoremstyle{definition}
\newtheorem{definition}[thm]{Definition}

\newtheorem{remark}[thm]{Remark}
\newtheorem{exmp}[thm]{Example}

\numberwithin{equation}{section}
\numberwithin{thm}{section}

\setlength{\parindent}{5pt}
\usepackage{setspace}
\setstretch{1.3}

\theoremstyle{remark} 
\newtheorem*{ack}{Acknowledgements}

\title{Stability and Finiteness of Wasserstein Spaces }
\date{\today}

\author[Mohammad Al-Attar]{Mohammad Alattar}
\address[Al-Attar]{Department of Mathematical Sciences, Durham University, United Kingdom}
\email{\href{mailto:mohammad.al-attar@durham.ac.uk}{mohammad.al-attar@durham.ac.uk}}

\begin{document}

\begin{abstract}
   Under Gromov--Hausdorff convergence, and equivariant Gromov--Hausdorff convergence, we prove stability results of Wasserstein spaces over certain classes of singular and non-singular spaces. For example, we obtain an analogue of Perelman's stability theorem on Wasserstein spaces.
    
\end{abstract}

    \subjclass[2020]{53C23, 53C20, 51K10}
	\keywords{equivariant Gromov--Hausdorff convergence, Gromov--Hausdorff convergence, Wasserstein spaces, stability}

\maketitle

\section{Introduction}
Finiteness theorems for finite dimensional spaces with curvature bounds can be traced back to 1967,  with Weinstein \cite{Weinstein-Homotopy-Finiteness} proving that given any even natural number $n$, and any $\delta>0$, there are only finitely many homotopy types of $n$-dimensional, $\delta$-pinched manifolds. Shortly after, in 1970, Cheeger obtained several finiteness theorems in his celebrated work \cite{Jeff-Cheeger}. For instance, he proved that given any natural number $n$, and real numbers $D,v,k>0$, there are only finitely many diffeomorphism types of compact $n$-dimensional manifolds $X$ admitting a Riemannian metric such that diameter$(X)$ $\leq D$, vol$(X)$ $\geq v$ and $|\sec(X)|\leq k^2$, where $\sec(X)$ denotes the sectional curvature. These results served as departure points for further seminal and important developments in Riemannian geometry (see for instance \cite{Anderson--Cheeger, Homotopy-Finiteness-GP, FinitenessViaControlledTopology,Grove-Finiteness-Theorems-Survey,perelman1991alexandrov}).

In 1981, Gromov \cite{GH-Convergence} introduced a powerful notion of convergence, now known as \emph{Gromov--Hausdorff convergence}, that has been useful in establishing (among other things) finiteness theorems through the notion of ``stability''. For instance, using Gromov--Hausdorff convergence, Grove, Petersen and Wu \cite{FinitenessViaControlledTopology}, in 1990, relaxed the curvature condition in Cheeger's finiteness theorem and showed that for any $n\neq 3$, real number $k$, and any positive numbers $D$ and $v$, the class consisting of compact $n$-dimensional Riemannian manifolds $X$ with diameter$(X)\leq D$,  $\sec(X)\geq k$, and vol$(X)\geq v$ contains finitely many homeomorphism types.

In 1991, Perelman \cite{perelman1991alexandrov,Vitali}, using Gromov--Hausdorff convergence, showed that given a non-collapsing convergent sequence of compact Alexandrov spaces, with a uniform lower curvature bound, uniform finite dimension and uniform upper diameter bound, the tail of the sequence \textit{stabilizes} topologically. That is, all spaces in the tail of the sequence are mutually homeomorphic. As a corollary, one obtains homeomorphism finiteness (in all dimensions) for a more general class of spaces that need not be manifolds.

In all cases of finiteness and stability, attention has been restricted to finite dimensional spaces. Thus, the question of stability in infinite dimensions remains open. In this paper, we focus our attention on  \emph{Wasserstein spaces}, which are infinite-dimensional, and prove stability and finiteness results for these spaces in different settings. Our proofs are not technical, and these results are, to the best of our knowledge, the first stability and finiteness results for infinite-dimensional spaces.

 Let $\mathcal{A}^{\leq}(n,K,D)$ denote the class of compact Alexandrov spaces with dimension at most $n$,  curvature bounded below by $K$, and diameter bounded above by $D$.  We denote by $\mathcal{A}(n,K,D)$ all Alexandrov spaces in $\mathcal{A}^{\leq}(n,K,D)$ with dimension $n$. Given $v>0$, we will denote by $\mathcal{A}(n,K,D,v)$ the subset of $\mathcal{A}(n,K,D)$ where the Alexandrov spaces under consideration have volume bounded below by $v$. Furthermore, given a sequence of compact metric spaces $\{X_i\}_{i\in \mathbb{N}}$ and a compact metric space $X$, we will denote by $X_i\rightarrow_{GH}X$ the Gromov--Hausdorff convergence of $X_i$ to $X$. We will denote by $\mathbb{P}_2(X)$ the Wasserstein space over $X$ equipped with the $2$-Wasserstein metric (see Definition \ref{wasserstein metric}).

\begin{theorem}
\label{theorem A}

Assume $\{X_i\}_{i\in \mathbb{N}}$ and $X$ are in $\mathcal{A}^{\leq}(n,K,D)$. Then the following assertions are equivalent:

\begin{enumerate}
    \item $\mathbb{P}_2(X_i)\rightarrow_{GH} \mathbb{P}_2(X)$.
    \item $X_i\rightarrow_{GH}X$.
    \item $(\mathbb{P}_2(X_i),X_i)\rightarrow_{GH}(\mathbb{P}_2(X),X)$.
\end{enumerate}
\end{theorem}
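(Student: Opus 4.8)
The plan is to run the cycle of implications $(2)\Rightarrow(3)\Rightarrow(1)\Rightarrow(2)$, which yields the full equivalence. The tools I would rely on are: the isometric embedding $\iota_X\colon X\hookrightarrow \mathbb{P}_2(X)$, $x\mapsto\delta_x$, which satisfies $W_2(\delta_x,\delta_y)=d(x,y)$ and identifies $X$ with the set of Dirac masses; the functoriality of the Wasserstein construction, namely that a map $f\colon X\to Y$ pushes measures forward to $f_\#\colon \mathbb{P}_2(X)\to\mathbb{P}_2(Y)$; the Gromov precompactness and Gromov--Hausdorff closedness of the class $\mathcal{A}^{\leq}(n,K,D)$; and a rigidity (``base-space recovery'') statement asserting that an isometry $\mathbb{P}_2(X)\cong\mathbb{P}_2(Y)$ forces $X\cong Y$.

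For $(2)\Rightarrow(3)$ I would start from $\varepsilon_i$-isometries (or correspondences) $f_i\colon X_i\to X$ witnessing $X_i\to_{GH}X$ and push measures forward. The key quantitative estimate is that, for $\mu,\nu\in\mathbb{P}_2(X_i)$, transporting an optimal plan through $f_i$ distorts the squared transport cost by at most $O(\varepsilon_i D)$, since $|d^2-d'^2|\le (d+d')|d-d'|\le 2D\varepsilon_i$ whenever two distances differ by at most $\varepsilon_i$ in a space of diameter $\le D$; hence $(f_i)_\#$ is an $\varepsilon_i'$-isometry with $\varepsilon_i'\to0$, and its image is $\varepsilon_i'$-dense because finitely supported measures are $W_2$-dense. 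Crucially $(f_i)_\#\delta_x=\delta_{f_i(x)}$, so the \emph{same} correspondence simultaneously matches the Dirac copies $X_i\subseteq\mathbb{P}_2(X_i)$ with $X\subseteq\mathbb{P}_2(X)$. This is precisely the equivariant/pair convergence $(\mathbb{P}_2(X_i),X_i)\to_{GH}(\mathbb{P}_2(X),X)$, giving $(3)$. The implication $(3)\Rightarrow(1)$ is then immediate, since forgetting the distinguished subset turns a pair approximation into an ordinary one.

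For $(1)\Rightarrow(2)$ I would argue by contradiction using precompactness. If $X_i\not\to_{GH}X$, choose a subsequence with $d_{GH}(X_{i_k},X)\ge\varepsilon>0$; by Gromov precompactness and the Gromov--Hausdorff closedness of $\mathcal{A}^{\leq}(n,K,D)$ (the lower curvature bound, the dimension bound, and the diameter bound all pass to limits) extract a further subsequence $X_{i_{k_j}}\to_{GH}Y\in\mathcal{A}^{\leq}(n,K,D)$ with $Y\not\cong X$. Applying the already-established implication $(2)\Rightarrow(1)$ (which follows from $(2)\Rightarrow(3)\Rightarrow(1)$ above) to this subsequence gives $\mathbb{P}_2(X_{i_{k_j}})\to_{GH}\mathbb{P}_2(Y)$, while hypothesis $(1)$ gives $\mathbb{P}_2(X_{i_{k_j}})\to_{GH}\mathbb{P}_2(X)$. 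Uniqueness of Gromov--Hausdorff limits forces $\mathbb{P}_2(X)\cong\mathbb{P}_2(Y)$, and the rigidity statement then yields $X\cong Y$, contradicting $Y\not\cong X$.

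The main obstacle is the rigidity input: recovering $X$, up to isometry, from the intrinsic metric geometry of $\mathbb{P}_2(X)$. Equivalently, one must show that the set of Dirac masses is preserved by every isometry of Wasserstein spaces, which requires a purely metric characterization of $\{\delta_x\}$ inside $\mathbb{P}_2(X)$. The natural candidate is that the Dirac masses are exactly the extreme points; the difficulty is that extremality is an affine rather than metric notion, and a naive metric substitute fails (for instance, in $\mathbb{R}$ the Dirac $\delta_0$ is the midpoint of the $W_2$-geodesic from $\delta_{-1}$ to $\delta_1$, so ``not being a midpoint'' does not detect Diracs). I would therefore isolate this characterization as a separate lemma---either citing the isometric-rigidity literature for Wasserstein spaces, or exploiting the Alexandrov structure of $X$ (e.g.\ uniqueness of the optimal coupling out of a Dirac mass, or the behaviour of $W_2(\mu,\cdot)$ along displacement interpolations) to pin down $\{\delta_x\}$ metrically---and treat everything else as the routine functorial estimates described above.
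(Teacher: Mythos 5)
Your proposal is correct and follows essentially the same route as the paper: $(2)\Rightarrow(3)$ by pushing forward $\epsilon$-isometries (the paper adds a small lemma ensuring the approximations can be taken measurable so that push-forwards make sense), $(3)\Rightarrow(1)$ by forgetting the distinguished subset, and $(1)\Rightarrow(2)$ by contradiction via compactness of $\mathcal{A}^{\leq}(n,K,D)$ together with the isometric-rigidity result (the paper invokes Corollary 3.8 of Santos Rodr\'iguez, applicable because Alexandrov spaces are non-branching, have good transport behavior, and carry a qualitatively non-degenerate Hausdorff measure). The rigidity input you correctly isolate as the main obstacle is exactly the external result the paper cites, so no essential difference remains.
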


We point out a few remarks concerning the above theorem. First, the interesting implication is $(1)\implies (2)$. The implication $(2)\implies(1)$ is well known (see Corollary 4.3 in \cite{LottandVillani}) and holds for general compact metric spaces. Second, the notion of convergence in $(3)$ is known as \emph{Gromov--Hausdorff convergence of metric pairs}. It was introduced in \cite{che2022metric}, and later studied extensively in \cite{ahmudaandche}. This notion of convergence generalizes Gromov--Hausdorff convergence and takes into account pairs of the form $(X,A)$, where $X$ is a metric space, and $A$ is a closed non-empty subset of $X$. In our theorem, we identify the base space $X$ with the set of Dirac deltas in the Wasserstein space $\mathbb{P}_2(X)$. Thus, $(2)\implies(3)$ becomes trivial. However, the new idea for this implication is that we give an equivalent characterization of convergence of metric pairs using the notion of $\epsilon$-isometries (see Proposition~\ref{new characterization}). This equivalent characterization will be useful when we introduce the notion of relative equivariant Gromov--Hausdorff convergence (see Definition~\ref{relative gromov hausdorff convergence}) which takes into account a triple $(X,A,G)$, where $X$ is a compact metric space, $A$ is a closed, non-empty $G$-invariant subset of $X$ and $G$ is a closed subgroup of $\Isom(X)$, the group of isometries of $X$.

A consequence of the above theorem, is the following analogue of Perelman's stability theorem for Wasserstein spaces.

\begin{cor}
\label{corollary1}
Let $\{X_i\}_{i\in \mathbb{N}}$ and $X$ be spaces in $\mathcal{A}(n,K,D)$. If $\mathbb{P}_2(X_i)\rightarrow_{GH}\mathbb{P}_2(X)$ then for all sufficiently large $i$, $\mathbb{P}_2(X_i)$ and $\mathbb{P}_2(X)$ are homeomorphic, and the homeomorphisms can be taken to be Gromov--Hausdorff approximations.

\end{cor}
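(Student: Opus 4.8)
The plan is to deduce Corollary~\ref{corollary1} from Theorem~\ref{theorem A} together with Perelman's stability theorem applied to the base spaces $X_i$ and $X$. The key observation is that the spaces live in $\mathcal{A}(n,K,D)$, meaning they have the \emph{full} dimension $n$, a uniform lower curvature bound $K$, and a uniform upper diameter bound $D$; what is needed to invoke Perelman's theorem is a non-collapsing assumption, i.e. a uniform lower volume bound. I would first extract such a bound: by the equivalence $(1)\implies(2)$ in Theorem~\ref{theorem A}, the hypothesis $\mathbb{P}_2(X_i)\to_{GH}\mathbb{P}_2(X)$ forces $X_i\to_{GH}X$. Since $X\in\mathcal{A}(n,K,D)$ is genuinely $n$-dimensional it has positive $n$-dimensional Hausdorff measure $v:=\mathcal{H}^n(X)>0$, and by lower semicontinuity (indeed continuity, by the volume convergence theorem of Colding and Cheeger--Colding in the non-collapsed Alexandrov setting) of volume under non-collapsing Gromov--Hausdorff convergence, for all sufficiently large $i$ one has $\mathcal{H}^n(X_i)\geq v/2>0$. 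Thus, after discarding finitely many terms, the tail of $\{X_i\}$ together with $X$ lies in $\mathcal{A}(n,K,D,v/2)$.

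Second, I would apply Perelman's stability theorem to this non-collapsing convergent sequence $X_i\to_{GH}X$: it yields, for all sufficiently large $i$, homeomorphisms $h_i\colon X_i\to X$ which can moreover be taken to be $\epsilon_i$-Gromov--Hausdorff approximations with $\epsilon_i\to 0$.

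Third, I would push these homeomorphisms forward to the level of Wasserstein spaces. The assignment $X\mapsto\mathbb{P}_2(X)$ is functorial with respect to homeomorphisms: a homeomorphism $h_i\colon X_i\to X$ induces a pushforward map $(h_i)_*\colon\mathbb{P}_2(X_i)\to\mathbb{P}_2(X)$, $\mu\mapsto (h_i)_*\mu$, which is a bijection with inverse $(h_i^{-1})_*$, and both directions are continuous with respect to the weak topology, so $(h_i)_*$ is a homeomorphism of Wasserstein spaces. This already gives the homeomorphism claim $\mathbb{P}_2(X_i)\cong\mathbb{P}_2(X)$ for large $i$. It remains to check that $(h_i)_*$ is a Gromov--Hausdorff approximation with respect to the $2$-Wasserstein metrics, which is where I expect the main technical effort to lie.

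The hard part will be quantifying how the pushforward distorts the Wasserstein metric in terms of the distortion of $h_i$ on the base. If $h_i$ moves points by at most $\epsilon_i$ in the appropriate Gromov--Hausdorff sense, one expects the induced map on measures to move them by a comparable amount in $\mathbb{P}_2$: given an optimal coupling $\pi$ between $\mu,\nu\in\mathbb{P}_2(X_i)$, pushing $\pi$ forward by $h_i\times h_i$ produces a competitor coupling between $(h_i)_*\mu,(h_i)_*\nu$, and the cost changes only by terms controlled by the displacement of $h_i$ together with the boundedness of the diameter $D$. Making this estimate uniform over all $\mu,\nu$ — so as to obtain an $\epsilon_i'$-isometry with $\epsilon_i'\to 0$ — is the crux; the uniform diameter bound $D$ and the fact that $\mathbb{P}_2(X_i)$ also has diameter at most $D$ are what keep the error terms under control. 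Alternatively, and perhaps more cleanly, I would invoke Theorem~\ref{theorem A} once more: having established $X_i\to_{GH}X$, the functoriality of $\mathbb{P}_2$ under $\epsilon$-isometries shows the $\epsilon_i$-approximations $h_i$ induce $\delta_i$-approximations $(h_i)_*$ on Wasserstein spaces with $\delta_i\to0$, so the homeomorphisms furnished by Perelman's theorem are automatically realized as Gromov--Hausdorff approximations at the level of $\mathbb{P}_2$, completing the proof.
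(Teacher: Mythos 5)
Your argument is correct and follows essentially the same route as the paper: deduce $X_i\rightarrow_{GH}X$ from Theorem~\ref{theorem A}, apply Perelman's stability theorem to the base spaces (non-collapsing being automatic since $X\in\mathcal{A}(n,K,D)$ has full dimension $n$, so your detour through volume bounds, while harmless, is not needed), and push the resulting homeomorphisms forward, using that $h_\#$ is a homeomorphism when $h$ is and that $\epsilon$-approximations on the base induce $\tilde\epsilon$-approximations on $\mathbb{P}_2$ via Corollary 4.3 of Lott--Villani. The paper's proof is just a terser version of exactly this.
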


Note that in the preceding corollary we do not impose any other structure on the Wasserstein spaces other than that coming from the base spaces.

We further have the following finiteness result. For any $n\in \mathbb{N}$, $K\in \mathbb{R}, D>0$ and $v>0$, let $W_2(n,K,D,v)$ denote the class consisting of Wasserstein spaces with base spaces in $\mathcal{A}(n,K,D,v)$ and equipped with the $2$-Wasserstein metric.

\begin{cor}
\label{corollary 2}
There are only finitely many topological types in $W_2(n,K,D,v)$.

\end{cor}

We note that the technique in this paper can be applied  whenever we have a stability result on the base spaces and essentially reduces to the following result.

\begin{theorem}
\label{general theorem}

Let $\mathfrak{X}$ denote a precompact class (with respect to the Gromov--Hausdorff topology) of compact non-branching metric measure spaces having good transport behavior $(GTB)$ and their reference measures are qualitivatively non-degenerate. Assume that if $\{X_i\}_{i=1}^{\infty}$ is a sequence in $\mathfrak{X}$, and $X_i\rightarrow_{GH} X$, then $X$ is a non-branching compact metric space admitting a measure that is qualitatively non-degenerate and has good transport 
behavior (we will take such a measure as a reference measure on $X$). Then, the following assertions are equivalent:

\begin{enumerate}
    \item $\mathbb{P}_2(X_i)\rightarrow_{GH}\mathbb{P}_2(X)$.
    \item $X_i\rightarrow_{GH} X$.
\end{enumerate}

\end{theorem}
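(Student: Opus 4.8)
The plan is to establish the easy implication $(2)\Rightarrow(1)$ by a pushforward argument and then derive the interesting implication $(1)\Rightarrow(2)$ by combining precompactness, uniqueness of Gromov--Hausdorff limits, and an isometric rigidity statement for Wasserstein spaces. For $(2)\Rightarrow(1)$ I would invoke the fact—valid for arbitrary compact metric spaces, see Corollary 4.3 in \cite{LottandVillani}—that Gromov--Hausdorff convergence of base spaces is inherited by their $2$-Wasserstein spaces. Concretely, an $\epsilon$-isometry $f\colon X_i\to X$ induces, via the pushforward $\mu\mapsto f_*\mu$, a map $\mathbb{P}_2(X_i)\to\mathbb{P}_2(X)$ whose distortion is controlled by that of $f$, because an optimal plan for $(\mu,\nu)$ pushes forward to an admissible plan for $(f_*\mu,f_*\nu)$; together with density of pushforwards this yields an $\epsilon'$-isometry with $\epsilon'\to 0$ as $\epsilon\to 0$. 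This direction uses none of the structural hypotheses on $\mathfrak{X}$.

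The substance is $(1)\Rightarrow(2)$, which I would argue by a subsequence scheme. Since $\mathfrak{X}$ is precompact in the Gromov--Hausdorff topology, every subsequence of $\{X_i\}$ admits a further subsequence $X_{i_k}\rightarrow_{GH}Y$; by the standing assumption, the limit $Y$ is a non-branching compact metric space carrying a qualitatively non-degenerate reference measure with good transport behavior, so $Y$ falls within the scope of $(2)\Rightarrow(1)$. Applying that implication along this subsequence gives $\mathbb{P}_2(X_{i_k})\rightarrow_{GH}\mathbb{P}_2(Y)$, while hypothesis $(1)$ restricted to the same indices gives $\mathbb{P}_2(X_{i_k})\rightarrow_{GH}\mathbb{P}_2(X)$. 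By uniqueness of Gromov--Hausdorff limits of compact metric spaces, $\mathbb{P}_2(Y)$ and $\mathbb{P}_2(X)$ are isometric.

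It then remains to descend this isometry to the base spaces, that is, to show $\mathbb{P}_2(Y)\cong\mathbb{P}_2(X)$ forces $Y\cong X$. This is precisely where the hypotheses of non-branching, good transport behavior, and qualitative non-degeneracy of the reference measures enter: under exactly these conditions the Dirac masses admit an intrinsic, purely metric characterization inside the Wasserstein space, so that any isometry $\Phi\colon\mathbb{P}_2(X)\to\mathbb{P}_2(Y)$ must carry the set of Dirac masses of $X$ onto that of $Y$. Since $x\mapsto\delta_x$ is an isometric embedding of the base into its Wasserstein space, with $W_2(\delta_x,\delta_{x'})=d(x,x')$, the restriction of $\Phi$ to the Dirac masses yields an isometry $X\cong Y$. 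Feeding this back, every subsequence of $\{X_i\}$ has a sub-subsequence converging to a space isometric to $X$, and a standard argument then forces the full sequence to satisfy $X_i\rightarrow_{GH}X$, which is $(2)$.

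I expect the rigidity step—verifying that an abstract isometry between the two Wasserstein spaces preserves Dirac masses—to be the main obstacle, since it is the only place where the metric-measure structure, rather than merely the metric structure, of the base spaces is used; the precompactness and limit-uniqueness ingredients are soft. A secondary point requiring care is confirming that the subsequential limit $Y$ genuinely belongs to the class to which $(2)\Rightarrow(1)$ applies, which is exactly what the standing closure assumption on $\mathfrak{X}$ is designed to guarantee.
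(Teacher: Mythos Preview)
Your proposal is correct and follows essentially the same route as the paper: the implication $(2)\Rightarrow(1)$ is referred to Lott--Villani, while $(1)\Rightarrow(2)$ is obtained via precompactness, uniqueness of Gromov--Hausdorff limits, and the Santos-Rodr\'iguez rigidity result (Proposition~\ref{Jaime's Result}) that an isometry $\mathbb{P}_2(X)\cong\mathbb{P}_2(Y)$ forces $X\cong Y$ under the stated hypotheses. The only cosmetic difference is that the paper argues by contradiction (assume $d_{GH}(X_i,X)\geq\delta$ along a subsequence) whereas you phrase it as ``every subsequence has a sub-subsequence converging to $X$''; these are equivalent standard maneuvers.
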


The notion of good transport behavior was first introduced in the paper by Galaz-Garc\'ia, Kell, Mondino, and Sosa \cite{Garcia-Kell-Mondino}, and further studied by Kell \cite{Kell-Studied}. The class of metric measure spaces satisfying such condition is large. For instance, according to \cite{Garcia-Kell-Mondino}, strong $\mathrm{CD}^{*}(K,N)$ and essentially non-branching $\mathrm{MCP}(K,N)$ spaces admit good transport behavior. The notion of a measure being qualitatively non-degenerate was introduced by Cavalletti and Huesmann \cite{Cavalletti-Husemann} and later studied by Kell \cite{Kell-Studied}. It is technical to state, and we refer the reader to \cite{Kell-Studied,Cavalletti-Husemann,santos2021isometric} for more details, but we note a measure being qualitatively non-degenerate yields desirable information. For example, a qualitatively non-degenerate measure is doubling (Lemma 2.8 in \cite{santos2021isometric}).

We note  that the classes $\mathcal{A}^{\leq}(n,K,D)$, $\mathcal{A}(n,K,D)$ satisfy the hypotheses of Theorem~\ref{general theorem}. However, the properties in Theorem \ref{general theorem} are not closed under Gromov--Hausdorff convergence in general. This is because a compact metric measure space endowed with a qualitatively non-degenerate measure is doubling and thus must have finite dimension.

Using almost verbatim the proofs of Theorems \ref{general theorem} and \ref{theorem A}, and noting that $\mathrm{RCD}(K,N)$ spaces are non-branching \cite{qin}, we obtain the following corollary.

\begin{cor}
\label{RCD corollary}
Assume each $(X_i,d_i,\mathfrak{m}_i)$ is an compact $\mathrm{RCD}(K,N)$ space with uniform upper diameter bound. Assume $(X,d)$ is an $N$-dimensional compact Riemannian manifold. If  $\mathbb{P}_2(X_i)\rightarrow_{GH} \mathbb{P}_2(X)$ then for all large $i$, there exists a homeomorphism $G_i\colon \mathbb{P}_2(X_i)\rightarrow \mathbb{P}_2(X)$ that is Lipschitz and its inverse is Hölder.
\end{cor}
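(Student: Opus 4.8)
The plan is to deduce Corollary~\ref{RCD corollary} by combining the biconditional of Theorem~\ref{general theorem} (applied to the RCD setting) with an analogue of Corollary~\ref{corollary1} to upgrade a Gromov--Hausdorff approximation into a homeomorphism with quantitative regularity. Let me think about how the pieces fit.

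First, I need to verify that the RCD hypotheses let me invoke Theorem~\ref{general theorem}. Since $\mathrm{RCD}(K,N)$ spaces are non-branching (by \cite{qin}), have good transport behavior, carry a qualitatively non-degenerate reference measure, and the class with a uniform diameter bound is precompact in the Gromov--Hausdorff topology (by standard RCD compactness/stability theory), the hypotheses are met. The limit $(X,d)$ is assumed to be an $N$-dimensional compact Riemannian manifold, which certainly admits a qualitatively non-degenerate measure with good transport behavior (its volume measure), so the closure condition on the limit is satisfied. Thus $\mathbb{P}_2(X_i)\to_{GH}\mathbb{P}_2(X)$ is equivalent to $X_i\to_{GH}X$.

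Next, from $X_i\to_{GH}X$ I would invoke Perelman-type stability in the RCD setting. The crucial input is that an RCD space converging to a smooth manifold is, for large $i$, bi-Hölder homeomorphic to the limit: one gets homeomorphisms $\phi_i\colon X_i\to X$ that are $\epsilon_i$-Gromov--Hausdorff approximations and are in fact Hölder with Lipschitz inverse (or vice versa), with $\epsilon_i\to 0$. This regularity on the base spaces is exactly what I need to pass to the Wasserstein level.

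Finally, I would lift $\phi_i$ to the push-forward map $G_i\coloneqq(\phi_i)_*\colon \mathbb{P}_2(X_i)\to\mathbb{P}_2(X)$, $\mu\mapsto(\phi_i)_*\mu$. The key estimate is that for any map $\phi$ between compact metric spaces, the push-forward on Wasserstein spaces inherits the modulus of continuity of $\phi$: if $\phi$ is Lipschitz with constant $L$, then $(\phi)_*$ is Lipschitz with the same constant (by using any coupling and estimating $W_2((\phi)_*\mu,(\phi)_*\nu)\leq L\,W_2(\mu,\nu)$ through the coupling pushed forward along $\phi\times\phi$), and an analogous Hölder bound holds when $\phi$ is only Hölder. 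Since $\phi_i$ is a homeomorphism, $G_i$ is a bijection with inverse $(\phi_i^{-1})_*$, hence a homeomorphism; transferring the Lipschitz/Hölder structure of $\phi_i$ and $\phi_i^{-1}$ through the push-forward yields that $G_i$ is Lipschitz with Hölder inverse.

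The main obstacle is establishing the quantitative bi-Hölder regularity of the homeomorphisms $\phi_i$ on the base spaces, since the base-level stability statement I need is stronger than mere topological stability. I expect to supply this either by directly citing the appropriate RCD-to-manifold stability result or by adapting the argument behind Corollary~\ref{corollary1}; the remaining step, that push-forward preserves Lipschitz and Hölder moduli, is an elementary coupling estimate.
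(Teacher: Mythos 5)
Your proposal follows essentially the same route as the paper: reduce to $X_i\rightarrow_{GH}X$ via the Theorem~\ref{general theorem} mechanism, invoke the RCD-to-smooth-manifold stability result (the paper cites \cite{HondaandPeng}) to get base-level homeomorphisms that are Lipschitz with H\"older inverse, and lift them by push-forward, which preserves the Lipschitz/H\"older moduli. This is exactly the paper's proof, so no further comparison is needed.
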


A consequence of our next main theorem is an equivariant stability result for Wasserstein spaces. We first discuss notation.  Given a closed subgroup $G$ of $\Isom(X)$, where $X$ is a compact metric space, we denote by $G^{\#}$ the induced subgroup of $\Isom(\mathbb{P}_2(X))$ given by the push-forward of maps. Given a sequence of pairs $\{(X_i,G_i)\}_{i\in \mathbb{N}}$, where each $X_i$ is a compact metric space and $G_i$ is a closed subgroup of $\Isom(X)$, we will denote, by $(X_i,G_i)\rightarrow_{eGH}(X,G)$ the equivariant Gromov--Hausdorff convergence of $(X_i,G_i)$ to $(X,G)$. This convergence, originally introduced by Fukaya (see \cite{fukaya1986theory,fukayaandyamaguchiannals,hausdorffconvergencefukaya}), 
has been useful in establishing interesting results in both the singular and non-singular setting (see for instance \cite{zamora2022betti,corro2024collapsing,brue2023stability,harvey2016convergence,Zamorafiniteness, corro2024cohomogeneity,cavallucci2023ghcompactification}). Furthermore,  it has been useful in obtaining finiteness and stability results in the singular setting. For instance, using equivariant Gromov--Hausdorff convergence, Zamora extended Anderson finiteness to the $\mathrm{RCD}^{*}(K,N)$ setting \cite{Zamorafiniteness}.

\begin{theorem}
\label{theorem E}

Let $\{X_i\}_{i\in \mathbb{N}}$ and $X$ be closed Riemannian manifolds with uniform lower sectional curvature bound $K>0$, uniform upper diameter bound and uniform upper dimension bound. Assume $H_i$ and $H$ are closed subgroups of $\Isom(\mathbb{P}_2(X_i))$ and $\Isom(\mathbb{P}_2(X))$ respectively. Then the following assertions are equivalent:

\begin{enumerate}
    \item $(\mathbb{P}_2(X_i),H_i)\rightarrow_{eGH}(\mathbb{P}_2(X),H)$.
    \item There exists unique closed subgroups $G_i$ of $\Isom(X)$ and $G$ of $\Isom(X)$ such that $G_i^{\#}=H_i$ and $G^{\#}=H$ and such that $(X_i,G_i)\rightarrow_{eGH}(X,G)$.
    \item $(\mathbb{P}_2(X_i),X_i,H_i)\rightarrow_{eGH}(\mathbb{P}_2(X),X,H)$.

\end{enumerate}

\end{theorem}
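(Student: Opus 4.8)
The plan is to prove the cycle $(2)\implies(3)\implies(1)\implies(2)$, using Theorem~\ref{theorem A} and Proposition~\ref{new characterization} as the metric backbone and the isometric rigidity of Wasserstein spaces to transport group actions between a base space and its Wasserstein space. Since each $X_i$ and $X$ is a closed Riemannian manifold with $\sec\geq K>0$ and uniformly bounded diameter and dimension, they all lie in a common class $\mathcal{A}(n,K,D)$, so Theorem~\ref{theorem A} applies to the underlying spaces and pairs. The first task is the correspondence between closed subgroups of $\Isom(X)$ and of $\Isom(\mathbb{P}_2(X))$. The pushforward map $\#\colon\Isom(X)\to\Isom(\mathbb{P}_2(X))$, $\varphi\mapsto\varphi_\#$, is an injective homomorphism, since restricting $\varphi_\#$ to the isometrically embedded Dirac masses $\{\delta_x:x\in X\}$ recovers $\varphi$. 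By the isometric rigidity of Wasserstein spaces over closed Riemannian manifolds (whose volume measures are qualitatively non-degenerate), this map is also surjective, hence an isomorphism of topological groups; the same holds for each $X_i$. Consequently $G:=\#^{-1}(H)$ and $G_i:=\#^{-1}(H_i)$ are the unique closed subgroups with $G^\#=H$ and $G_i^\#=H_i$, which supplies the existence-and-uniqueness clause of $(2)$ once convergence is established.

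For $(2)\implies(3)$ I would begin from equivariant $\epsilon_i$-approximations $f_i\colon X_i\to X$ together with group-approximation maps $\phi_i\colon G_i\to G$. The pushforwards $(f_i)_\#\colon\mathbb{P}_2(X_i)\to\mathbb{P}_2(X)$ are $\delta_i$-isometries with $\delta_i\to0$, which is exactly the quantitative input behind the implication $(2)\implies(1)$ of Theorem~\ref{theorem A}, and they carry Dirac masses to Dirac masses, hence respect the distinguished subsets $X_i\hookrightarrow\mathbb{P}_2(X_i)$. Functoriality of the pushforward converts the base equivariance $f_i\circ g\approx\phi_i(g)\circ f_i$ into the Wasserstein equivariance $(f_i)_\#\circ g_\#\approx\phi_i(g)_\#\circ(f_i)_\#$, the error staying controlled because $W_2$ depends continuously on the maps being pushed forward. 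Feeding these data into the $\epsilon$-isometry characterization of Proposition~\ref{new characterization} and its equivariant refinement (Definition~\ref{relative gromov hausdorff convergence}) yields the relative equivariant convergence $(\mathbb{P}_2(X_i),X_i,H_i)\rightarrow_{eGH}(\mathbb{P}_2(X),X,H)$ of $(3)$. The implication $(3)\implies(1)$ is the forgetful direction: discarding the distinguished subset $X_i$ reduces relative equivariant convergence of the triples to equivariant convergence of the pairs $(\mathbb{P}_2(X_i),H_i)$, which is $(1)$.

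The crux is $(1)\implies(2)$. Forgetting the groups in $(1)$ gives $\mathbb{P}_2(X_i)\rightarrow_{GH}\mathbb{P}_2(X)$, so Theorem~\ref{theorem A} already delivers the base convergence $X_i\rightarrow_{GH}X$; what remains is to upgrade it to the equivariant statement. Given equivariant approximations $F_i\colon\mathbb{P}_2(X_i)\to\mathbb{P}_2(X)$ realizing $(1)$, together with group approximations $H_i\to H$, I would read the latter, via the rigidity of the first step and the identifications $H_i=G_i^\#$, $H=G^\#$, as approximations $G_i\to G$. The key point is that the Dirac-mass subset is metrically distinguished inside the Wasserstein space and is preserved by every element of $H_i=G_i^\#$; restricting $F_i$ to (a subset close to) the Diracs and composing with the isometric identification $X\cong\{\delta_x\}$ produces base approximations $f_i\colon X_i\to X$, whose equivariance with respect to $G_i\to G$ is inherited from the equivariance of $F_i$ with respect to $H_i\to H$. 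This gives $(X_i,G_i)\rightarrow_{eGH}(X,G)$ and closes the cycle.

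The main obstacle is exactly this final descent: one must ensure that the Wasserstein-level approximations $F_i$ resolve the Dirac subset faithfully enough that their restriction is a genuine equivariant base approximation, and that no error accumulates when translating back and forth through the pushforward map. This is where the isometric rigidity theorem does the essential work, making the bijection $H\leftrightarrow G$ available and compatible with the metric approximations, and where Proposition~\ref{new characterization}, flagged in the remarks after Theorem~\ref{theorem A} as the tool tailored to the relative equivariant setting, keeps the bookkeeping of the equivariant $\epsilon$-isometries under control.
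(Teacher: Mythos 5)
Your handling of the group correspondence and of the implications $(2)\implies(1)$, $(2)\implies(3)$ and $(3)\implies(1)$ matches the paper: the pushforward of an equivariant approximation is an equivariant approximation of the Wasserstein spaces (Lemma~\ref{pro: inducing-equivariant-convergence-wasserstein-spaces}), it sends Dirac masses to Dirac masses so the relative version follows via Proposition~\ref{new characterization}, and isometric rigidity of positively curved closed manifolds makes $\#$ a bijection onto $\Isom(\mathbb{P}_2(X))$, giving the unique $G_i$ and $G$. The problem is your $(1)\implies(2)$. You propose to restrict the Wasserstein-level approximations $F_i$ to the Dirac set and read off equivariant base approximations $f_i\colon X_i\to X$. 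But an $\epsilon$-isometry $F_i\colon\mathbb{P}_2(X_i)\to\mathbb{P}_2(X)$ need not carry Dirac masses anywhere near Dirac masses: the rigidity results you invoke (Proposition~\ref{Jaime's Result} and Lemma~\ref{equivariant isometry induced from wasserstein}) are statements about \emph{exact} isometries, and nothing in the paper supplies a quantitative version asserting that almost-isometries almost preserve the Dirac set. You flag this yourself as ``the main obstacle'' and then assert that isometric rigidity ``does the essential work''; as stated, it does not, because it says nothing about $\epsilon$-isometries. This descent step is a genuine gap.

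The paper circumvents exactly this difficulty by contradiction and compactness: if $(X_i,G_i)\not\rightarrow_{eGH}(X,G)$, pass to a subsequence with $d_{eGH}((X_i,G_i),(X,G))\geq\delta$, use precompactness of $\mathcal{A}^{\leq}(n,K,D)$ together with equivariant precompactness to extract a limit $(X',G')$, push forward via Lemma~\ref{pro: inducing-equivariant-convergence-wasserstein-spaces} to conclude that $(\mathbb{P}_2(X'),(G')^{\#})$ and $(\mathbb{P}_2(X),G^{\#})$ are exactly equivariantly isometric, and only then apply Lemma~\ref{equivariant isometry induced from wasserstein} to exact isometries to get an equivariant isometry of $(X',G')$ with $(X,G)$, a contradiction. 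If you want to keep your direct approach you would first have to prove a stability statement of the form: for every $\eta>0$ there is $\epsilon>0$ such that any $\epsilon$-isometry between Wasserstein spaces over spaces in this class moves Dirac masses at most $\eta$ away from the Dirac set, equivariantly --- and the natural proof of that statement is again the compactness-plus-rigidity contradiction argument, so nothing is saved by avoiding it.
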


We emphasize that we identify the base space with the (closed) subset of Dirac deltas in the Wasserstein space. In the presence of a uniform lower curvature bound, we have the following corollary, an equivariant stability result on Wasserstein spaces.

\begin{cor}
\label{corollary 3}
Let $\{X_i\}_{i\in \mathbb{N}}$ and $X$ be closed Riemannian manifolds with uniform lower sectional curvature bound $K>0$, uniform upper diameter bound and uniform dimension. Assume $H_i$ and $H$ are closed subgroups of $\Isom(\mathbb{P}_2(X_i))$ and $\Isom(\mathbb{P}_2(X))$ respectively of the same dimension. If $(\mathbb{P}_2(X_i),H_i)\rightarrow_{eGH}(\mathbb{P}_2(X),H)$, then for all large $i$, there exists a homeomorphism $f_i\colon\mathbb{P}_2(X_i)\rightarrow \mathbb{P}_2(X)$ that is compatible with some group isomorphism $\chi_i\colon H_i\rightarrow H$.
\end{cor}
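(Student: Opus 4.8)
The plan is to use Theorem~\ref{theorem E} to push the hypothesis about the Wasserstein spaces down to an equivariant convergence statement about the base manifolds, apply an equivariant version of Perelman's stability theorem on the base, and then push the resulting equivariant homeomorphisms back up to the Wasserstein spaces by functoriality of the push-forward.

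First I would invoke the implication $(1)\Rightarrow(2)$ of Theorem~\ref{theorem E}: the hypothesis $(\mathbb{P}_2(X_i),H_i)\rightarrow_{eGH}(\mathbb{P}_2(X),H)$ yields unique closed subgroups $G_i\leq\Isom(X_i)$ and $G\leq\Isom(X)$ with $G_i^{\#}=H_i$, $G^{\#}=H$, and $(X_i,G_i)\rightarrow_{eGH}(X,G)$. I would then record that the assignment $\phi\mapsto\phi_{\#}$ is an isomorphism of topological groups $G\to G^{\#}=H$: it is a continuous homomorphism, surjective by the definition of $G^{\#}$, and injective because $\phi_{\#}\delta_x=\delta_{\phi(x)}$ recovers $\phi$ from $\phi_{\#}$ on the Dirac deltas; likewise $G_i\cong H_i$. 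Consequently the assumption that $H_i$ and $H$ have the same dimension is equivalent to $\dim G_i=\dim G$, which is precisely the non-collapse condition on the groups required by the base-space stability result.

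Next, forgetting the group actions gives $X_i\rightarrow_{GH}X$ among closed Riemannian manifolds of uniform dimension with $\sec\geq K>0$ and uniformly bounded diameter, a non-collapsing sequence to which Perelman's stability theorem applies. In the presence of the equivariant convergence $(X_i,G_i)\rightarrow_{eGH}(X,G)$ with $\dim G_i=\dim G$, I would appeal to the equivariant refinement of Perelman's stability theorem to obtain, for all large $i$, a homeomorphism $g_i\colon X_i\to X$ that is compatible with an isomorphism $\psi_i\colon G_i\to G$, that is, $g_i\circ\phi=\psi_i(\phi)\circ g_i$ for every $\phi\in G_i$.

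Finally I would push these maps forward. Since $X_i$ and $X$ are compact, the $2$-Wasserstein topology on $\mathbb{P}_2$ coincides with the weak topology, and for a homeomorphism $g_i$ of the base the push-forward $f_i:=(g_i)_{\#}\colon\mathbb{P}_2(X_i)\to\mathbb{P}_2(X)$, $\mu\mapsto (g_i)_{\#}\mu$, is a homeomorphism with inverse $(g_i^{-1})_{\#}$; continuity in the weak topology is the identity $\int h\,d\big((g_i)_{\#}\mu\big)=\int (h\circ g_i)\,d\mu$ together with the continuity of $h\circ g_i$. I would then define $\chi_i\colon H_i\to H$ by $\chi_i(\phi_{\#})=(\psi_i(\phi))_{\#}$, a well-defined isomorphism via the identifications $G_i\cong H_i$ and $G\cong H$, and verify compatibility as a formal consequence of functoriality: for $\phi\in G_i$ one has $f_i\circ\phi_{\#}=(g_i\circ\phi)_{\#}=(\psi_i(\phi)\circ g_i)_{\#}=\chi_i(\phi_{\#})\circ f_i$, using the compatibility of $g_i$ with $\psi_i$. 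The main obstacle is the second step: the equivariant strengthening of Perelman's stability theorem, where the hypothesis $\dim G_i=\dim G$ (equivalently $\dim H_i=\dim H$) is essential both to prevent the isometry groups from collapsing and to guarantee that the stability homeomorphisms can be chosen equivariantly with respect to a genuine group isomorphism. Once this base-space input is available, Theorem~\ref{theorem E} and the push-forward functor render both the reduction and the lift essentially formal.
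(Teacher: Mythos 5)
Your proposal is correct and follows essentially the same route as the paper: apply Theorem~\ref{theorem E} to descend to $(X_i,G_i)\rightarrow_{eGH}(X,G)$, use isometric rigidity to transfer the dimension hypothesis from $H_i,H$ to $G_i,G$, invoke the equivariant stability theorem for the base spaces (this is exactly Theorem A of \cite{alattar2023stability}, which the paper cites), and push the resulting equivariant homeomorphisms forward to the Wasserstein spaces. Your write-up is in fact more explicit than the paper's about the final push-forward and compatibility verification, which the paper leaves implicit.
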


Our article is organized as follows. In section \ref{preliminaries section}, we discuss the preliminaries that will be useful in this paper. Moreover, we will give a new characterization of convergence of metric pairs. In section \ref{proof section} we give proofs of the theorems and corollaries.


\begin{ack}
 It is a pleasure to thank Fernando Galaz--Garc\'ia, Martin Kerin, Kohei Suzuki, Jaime Santos-Rodr\'iguez, and Mauricio Che for helpful conversations and comments during the Durham Metric Geometry Reading Seminar. I would also like to thank Sergio Zamora and Yanpeng Zhi for several interesting conversations and Aseel alnajjar for several comments on the first draft of this manuscript.
Part of this work was completed at the Erwin Schrödinger International Institute for Mathematics and Physics (ESI), where the author received financial support to participate in the workshop \emph{Synthetic Curvature Bounds for Non-Smooth Spaces: Beyond Finite Dimension}. The author would like to thank ESI and the conference organizers for the excellent atmosphere and working conditions.   
\end{ack}

\section{Preliminaries}\label{preliminaries section}

In this section, we collect preliminary definitions and results we will use in the proof of our main results.

\subsection{Convergence of Metric Pairs and Equivariant Convergence of Metric Pairs}

In this section, we give a new equivalent characterization of the notion of convergence of metric pairs which will allow us to introduce, canonically, the notion of relative equivariant Gromov--Hausdorff convergence. The equivalent characterization is demonstrated in the following proposition.  We refer the reader to the paper by Ahumada G\'omez and Che \cite{ahmudaandche} for results concerning the (non-equivariant) convergence of metric pairs.

\begin{prop}
\label{new characterization}

Let $\{X_i\}_{i\in \mathbb{N}}$ be a sequence of compact metric spaces. Let $\{A_i\}_{i\in \mathbb{N}}$ be a collection of closed non-empty subspaces  $A_i\subseteq X_i$. Let $X$ be a compact metric space, and $A\subseteq X$ a closed non-empty subset. Then the following assertions are equivalent:

\begin{enumerate}
    \item $(X_i,A_i)\rightarrow_{GH} (X,A)$.
    \item There exists a sequence of positive numbers $\{\epsilon_i\}_{i\in \mathbb{N}}$ such that $\epsilon_i\rightarrow 0$ and $\epsilon_i$-Gromov--Hausdorff approximations $f_i\colon X_i\rightarrow X$, $g_i\colon A_i\rightarrow A$ such that $f_i$ and $g_i$ are $\epsilon_i$-close, i.e., $d_{X}(f_i(x_i),g_i(x_i))\leq\epsilon_i$ for all $x_i\in A_i$.
\end{enumerate}

\end{prop}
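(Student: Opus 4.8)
The plan is to treat this as the metric-pair analogue of the classical dictionary between Gromov--Hausdorff convergence and $\epsilon$-approximations, the novelty being that the subset data must be tracked in tandem. Since every space here is compact, I will use the fact that convergence of pairs $(X_i,A_i)\to_{GH}(X,A)$ amounts to the existence, for each $i$, of a metric space $Z_i$ with isometric embeddings of $X_i$ and $X$ into $Z_i$ (under which I tacitly regard $X_i,X,A_i,A$ as subsets of $Z_i$) such that both Hausdorff distances $d_H^{Z_i}(X_i,X)$ and $d_H^{Z_i}(A_i,A)$ tend to $0$. I prove each implication by passing between such ambient embeddings and the approximation maps $f_i,g_i$.

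For $(1)\Rightarrow(2)$, fix $\delta_i\to 0$ with $d_H^{Z_i}(X_i,X)<\delta_i$ and $d_H^{Z_i}(A_i,A)<\delta_i$. Define $f_i\colon X_i\to X$ by letting $f_i(x_i)$ be any point of $X$ with $d_{Z_i}(x_i,f_i(x_i))<\delta_i$ (such a point exists because $d_H^{Z_i}(X_i,X)<\delta_i$), and define $g_i\colon A_i\to A$ by choosing $g_i(a_i)\in A$ with $d_{Z_i}(a_i,g_i(a_i))<\delta_i$. Since the embeddings are isometric, the triangle inequality yields $\lvert d_X(f_i(x_i),f_i(x_i'))-d_{X_i}(x_i,x_i')\rvert<2\delta_i$, and $\delta_i$-density of $X_i$ in $X$ makes $f_i(X_i)$ a $2\delta_i$-net; hence $f_i$ is a $2\delta_i$-approximation, and likewise $g_i$. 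Finally, for $x_i\in A_i$ both $f_i(x_i)$ and $g_i(x_i)$ lie within $\delta_i$ of $x_i$ in $Z_i$, so $d_X(f_i(x_i),g_i(x_i))<2\delta_i$; taking $\epsilon_i=2\delta_i$ gives (2).

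For $(2)\Rightarrow(1)$, which is the substantive direction, I build a single ambient metric on the disjoint union $X_i\sqcup X$ out of the map $f_i$ alone, via the standard gluing formula
\[
\hat d(x_i,x)=\inf_{x_i'\in X_i}\bigl(d_{X_i}(x_i,x_i')+d_X(f_i(x_i'),x)\bigr)+\epsilon_i,
\]
retaining $d_{X_i}$ and $d_X$ on the two pieces. A short computation using the distortion bound $d_X(f_i(a),f_i(b))\ge d_{X_i}(a,b)-\epsilon_i$ shows the additive $\epsilon_i$ term prevents any cross-space shortcut, so that $\hat d$ is a genuine metric restricting to the original ones on each factor; the $\epsilon_i$-density of $f_i$ then forces $d_H(X_i,X)\le 2\epsilon_i$. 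The crucial point is the subset estimate: for $a_i\in A_i$ the closeness hypothesis gives $\hat d(a_i,g_i(a_i))\le d_X(f_i(a_i),g_i(a_i))+\epsilon_i\le 2\epsilon_i$ with $g_i(a_i)\in A$, while $\epsilon_i$-density of $g_i$ realizes every $a\in A$ as $\epsilon_i$-close to some $g_i(a_i)$; together these bound $d_H(A_i,A)\le 3\epsilon_i$. Hence $d_{GH}((X_i,A_i),(X,A))\to 0$.

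The main obstacle is exactly this last point: a single ambient metric must simultaneously control both Hausdorff distances, yet it is manufactured only from $f_i$, which a priori only knows that $A_i$ sits near $X$, not near $A$. The $\epsilon_i$-closeness of $f_i$ and $g_i$ on $A_i$ is precisely what upgrades ``near $X$'' to ``near $A$'', and this is where hypothesis (2) is genuinely used. The accompanying verification that $\hat d$ obeys the triangle inequality and restricts correctly to each factor is routine, but it must be carried out in order to know the construction produces a legitimate witness $Z_i=(X_i\sqcup X,\hat d)$ for convergence of the pairs.
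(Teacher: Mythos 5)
Your argument is correct, but it runs along a genuinely different track from the paper's. You take as your characterization of $(X_i,A_i)\rightarrow_{GH}(X,A)$ the ambient-embedding formulation (common spaces $Z_i$ in which both $d_H(X_i,X)$ and $d_H(A_i,A)$ tend to $0$), whereas the paper starts from the other standard characterization from \cite{ahmudaandche}: a single $\epsilon_i$-approximation $f_i\colon X_i\to X$ satisfying $d_H(f_i(A_i),A)<\epsilon_i$ inside $X$. This changes both halves of the proof. In the forward direction the paper manufactures $g_i$ \emph{from} $f_i$ by projecting $f_i(a_i)$ to a nearby point of $A$ and then checks $g_i$ is a $3\epsilon_i$-approximation, while you choose $f_i$ and $g_i$ independently as nearest-point selections inside $Z_i$; the two are comparable in length. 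In the reverse direction the difference is more substantial: the paper simply merges $f_i$ and $g_i$ into one map $\xi_i$ (equal to $g_i$ on $A_i$ and $f_i$ off it) and verifies it is an approximation with $d_H(\xi_i(A_i),A)\le\epsilon_i$, whereas you build an explicit glued metric on $X_i\sqcup X$ and bound both Hausdorff distances there, using the $\epsilon_i$-closeness of $f_i$ and $g_i$ exactly where the paper uses it (to control the $A$-part). Your route is more self-contained in that it works directly from the embedding definition and does not presuppose the single-map characterization, at the cost of the (routine but necessary) verification that the glued $\hat d$ is a metric; the paper's route is shorter but leans on the equivalence already established in \cite{ahmudaandche}. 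Both are valid; if you keep your version, you should state explicitly that the ambient-embedding formulation is the definition (or a cited equivalent characterization) of convergence of metric pairs being used, and spell out the two cases of the triangle inequality for $\hat d$, since both the upper and lower distortion bounds on $f_i$ are needed there.
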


\begin{proof}

Assume $(X_i,A_i)\rightarrow_{GH}(X,A)$. Then there exists a sequence of numbers $\{\epsilon_i\}_{i\in \mathbb{N}}$ that tends to $0$ and $\epsilon_i$-Gromov--Hausdorff approximations $f_i\colon X_i\rightarrow X$ such that $d_{H}(f_i(A_i),A)< \epsilon_i$. Here, $d_{H}$ denotes the Hausdorff distance.  We define $g_i\colon A_i\rightarrow A$ as follows. Since $d_{H}(f_i(A_i),A)<\epsilon_i$, for each $a_i\in A_i$ choose $g_i(a_i)\in A$ such that $d_{X}(f_i(a_i),g_i(a_i))<\epsilon_i$. We will show that the map $g_i\colon A_i\rightarrow A$ is a $3\epsilon_i$-Gromov--Hausdorff approximation. Indeed, for each $a\in A$, there exists an $a_i\in A_i$ such that $d_{X}(f_i(a_i),a)\leq \epsilon$. Therefore, by the triangle inequality, we get $d_{X}(g_i(a_i),a)\leq 3\epsilon_i$. Now we show that $g_i$ is almost distance preserving up to an error $3\epsilon_i$. We only show one side of the inequality, for the other is similar. Let $a_i,a_i'\in A_i$. Then 
\[d_{X}(g_n(a_i),g_n(a_i'))\leq d_{X}(g_i(a_i),f_i(a_i))+d_{X}(f_i(a_i),f_i(a_i'))+d_{X}(f_i(a_i'),g_i(a_i')).\]

Now note that $d_{X}(g_i(a_i),f_i(a_i))\leq \epsilon_i$ and $d_{X}(f_i(a_i'),g_i(a_i'))\leq \epsilon_i$. Thus, as $f_i$ is an $\epsilon_i$-Gromov--Hausdorff approximation, we get 
\[ d_{X}(g_i(a_i),g_i(a_i'))\leq 3\epsilon_i+ d_{X_i}(a_i,a_i').\]

Now we show the reverse direction. Define $\xi_i\colon X_i\rightarrow X$ by 
\[ \xi_i(x_i)=\begin{cases} 
      f_i(x_i) & x_i\in X_i\backslash A_i \\
      g_i(x_i) & x_i \in A_i.\\
\end{cases}
\]

Since $g_i\colon A_i\rightarrow A$ is an $\epsilon_i$-Gromov--Hausdorff approximation, it follows that $d_{H}(g_i(A_i),A)\leq \epsilon_i$. It remains to verify that $\xi_i$ is a Gromov--Hausdorff approximation. It is clear that $\xi_i$ is $\epsilon_i$-surjective. Therefore, it remains to prove that $\xi_i$ is almost distance preserving up to an error that tends to $0$. Once again, we only verify one direction of the inequality, since the other is similar. Let $x_i,x_i'\in X_i$. It is clear that it suffices to assume that $x_i\in X_i\backslash A_i$ and that $x_i'\in A_i$. Then
\[d_{X}(\xi_i(x_i),\xi_i(x_i'))=d_{X}(f_i(x_i),g_i(x_i'))\leq d_{X}(f_i(x_i),f_i(x_i'))+d_{X}(f_i(x_i'),g_i(x_i')).\]

Since $f_i$ is an $\epsilon_i$-Gromov--Hausdorff approximation, $d_{X}(f_i(x_i),f_i(x_i'))\leq \epsilon_i+d_{X_i}(x_i,x_i')$. Since $f_i$ and $g_i$ are $\epsilon_i$-close, $d_{X}(f_i(x_i'),g_i(x_i'))\leq \epsilon_i$.  
\end{proof}

The proposition above suggests the following definition of relative equivariant Gromov--Hausdorff convergence.

\begin{definition}
\label{relative gromov hausdorff convergence}

Let $X$ and $Y$ be compact metric spaces. Suppose $G_{X}$ and $G_{Y}$ are closed subgroups of $\Isom(X)$ and $\Isom(Y)$ respectively. Assume $A_{X}$ is a closed non-empty $G_{X}$-invariant subset of $X$, and $A_{Y}$ is a closed non-empty $G_{Y}$-invariant subset of $Y$. For $\epsilon>0$, we define an \emph{$\epsilon$-equivariant Gromov--Hausdorff approximation between $(X,A_{X},G_{X})$ and $(Y,A_{Y},G_{Y})$} to be a quadruple of maps
\[ (f\colon X\rightarrow Y,f'\colon A_{X}\rightarrow A_{Y}, \theta\colon G_{X}\rightarrow G_{Y},\psi\colon G_{Y}\rightarrow G_{X})\]

subject to the following conditions:

\begin{enumerate}
    \item The triple $(f,\theta,\psi)$ is an $\epsilon$-equivariant Gromov--Hausdorff approximation (in the usual sense).
    \item $f'$ is an $\epsilon$-Gromov--Hausdorff approximation.
    \item The maps $f$ and $f'$ are $\epsilon$-close.
\end{enumerate}

\end{definition}

\begin{remark}
We use the terms ``equivariant Gromov--Hausdorff approximations'' and ``equivariant approximations'' interchangeably.

\end{remark}

\begin{definition}
Given a sequence of triples $\{(X_i,A_i,G_i)\}_{i\in \mathbb{N}}$, where $X_i$ is a compact metric space, $G_i$ is a closed subgoup of $\Isom(X_i)$ and $A_i$ is a closed non-empty $G_i$-invariant subset of $X_i$, we say that $(X_i,A_i,G_i)$ \textit{Gromov--Hausdorff equivariantly converges to} $(X,A,G)$, denoted $(X_i,A_i,G_i)\rightarrow_{eGH}(X,A,G)$, where $X$ is a compact metric space, $G$ is a closed subgroup of Isom$(X)$ and $A$ is a closed subset of $X$ that is $G$-invariant, \textit{if} there exists a sequence of positive real numbers $\{\epsilon_i\}_{i\in \mathbb{N}}$ such that $\epsilon_i\rightarrow 0$  and $\epsilon_i$-equivariant Gromov--Hausdorff approximations $(f_i,f'_i,\theta_i,\psi_i)\colon (X_i,A_i,G_i)\rightarrow (X,A,G)$.
\end{definition}

\subsection{Further definitions and auxiliary results}
For the convenience of the reader, we include the definitions of \emph{Wasserstein spaces}, \emph{good transport behavior} and \emph{isometric rigidity}.

\begin{definition}
\label{wasserstein metric}

Let $(X,d)$  be a Polish metric space and let $p\in [1,\infty)$.  Given any two probability measure $\mu$ and $\nu$ on $X$, we define the \emph{$L^p$-Wasserstein metric} between $\mu$ and $\nu$, denoted by $W_p$, to be 
\[W_p^p(\mu,\nu) =\inf_{\pi} \int d(x,y)^pd\pi(x,y),\]
where the infimum is taken over all admissible measures $\pi$ having marginals $\mu$ and $\nu$. The $L^p$-Wasserstein metric is a metric on the space of probability measures with finite $p$-moments, which is denoted by $\mathbb{P}_p(X)$.
\end{definition}

\begin{definition}A metric measure space $(X,d,\mathfrak{m})$ is said to have \emph{good transport behavior} if, for any two probability measures $\mu,\nu$ in $\mathbb{P}_2(X)$, where $\mu \ll \mathfrak{m}$, any optimal transport map between them is induced by a map. 
\end{definition}

We motivate the concept of isometric rigidity by the following example.

\begin{exmp}
If $X$ is a compact metric space and $f:X\rightarrow X$ is an isometry,  then $f$ induces an isometry $f_{\#}:\mathbb{P}_2(X)\rightarrow \mathbb{P}_2(X)$.

\end{exmp}

Thus, it is natural to formulate the following definition.

\begin{definition}
Let $(X,d)$ be a metric space. Then, $X$ is said to be \textit{isometrically rigid} if given any isometry $\theta:\mathbb{P}_2(X)\rightarrow \mathbb{P}_2(X)$, there exists an isometry $f:X\rightarrow X$ such that $\theta = f_{\#}$. In particular, every isometry of $\mathbb{P}_2(X)$ is induced by an isometry of $X$; the base space.

\end{definition}

The concept of isometric rigidity was first introduced by Kloeckner in the context of euclidean space in \cite{benoit1}, where he showed, among other things, that, $\mathbb{P}_2(\mathbb{R})$ is not isometrically rigid. Later, Bertrand and Kloeckner \cite{bertrantandklockner} studied the concept of isometric rigidity and showed that if $X$ is a negatively curved geodesically complete Hadamard space, then $\mathbb{P}_2(X)$ is isometrically rigid. In 2021, 
Santos Rodriguez showed \cite{santos2021isometric} that if $X$ is a positively curved closed Riemannian manifold, then $X$ is isometrically rigid and that, if $X$ is a CROSS, then, in fact, the isometry groups of $\mathbb{P}_p(X)$ and $X$ are the same for all $p>1$.  In the same paper,  Santos Rodriguez showed the following result which we will use frequently in this paper (Corollary 3.8, \cite{santos2021isometric}).

\begin{prop}
\label{Jaime's Result}
Let $(X,d_{X},\mathfrak{m}_{X})$ and $(Y,d_{Y},\mathfrak{m}_{Y})$ be two compact non-branching metric measure spaces equipped with qualitatively non-degenerate measures and such that they have good transport behavior. Suppose that there exists an isometry $\Phi:\mathbb{P}_2(X)\rightarrow \mathbb{P}_2(Y)$. Then $(X,d_{X})$ and $(Y,d_{Y})$ are isometric.

\end{prop}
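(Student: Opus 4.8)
The plan is to show that any isometry $\Phi\colon \mathbb{P}_2(X)\to\mathbb{P}_2(Y)$ must carry the set of Dirac masses of $X$ onto the set of Dirac masses of $Y$; the conclusion then follows at once. Write $\Delta(X)=\{\delta_x : x\in X\}\subseteq \mathbb{P}_2(X)$ and recall that $x\mapsto\delta_x$ is an isometric embedding, since $W_2(\delta_x,\delta_{x'})=d_X(x,x')$; thus $\Delta(X)$ is a closed subset isometric to $(X,d_X)$, and similarly for $Y$. If I can prove that $\Phi(\Delta(X))=\Delta(Y)$, then $\Phi|_{\Delta(X)}\colon \Delta(X)\to\Delta(Y)$ is a surjective isometry, and transporting it through the two embeddings produces the desired isometry $f\colon X\to Y$.

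To see that $\Phi$ preserves Dirac masses I would isolate a purely metric description of $\Delta(X)$ inside $\mathbb{P}_2(X)$, so that it is automatically invariant under isometries. Call $\mu$ a \emph{midpoint} of $\mu_0\neq\mu_1$ if $W_2(\mu_0,\mu)=W_2(\mu,\mu_1)=\tfrac12 W_2(\mu_0,\mu_1)$, and call $\mu$ \emph{extreme} if it is not a midpoint of any pair of distinct measures. Being extreme is phrased solely in terms of $W_2$, hence is preserved by the surjective isometry $\Phi$. The heart of the argument is therefore the claim that, under the standing hypotheses, the extreme points of $\mathbb{P}_2(X)$ are exactly the Dirac masses; granting this for both $X$ and $Y$ yields $\Phi(\Delta(X))=\Delta(Y)$.

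For the direction that every Dirac mass is extreme, I would argue by contradiction. If $\delta_x$ were the midpoint of distinct $\mu_0,\mu_1$, I lift the associated $W_2$-geodesic to an optimal dynamical plan $\Pi$ on geodesics of $X$ with $(e_{1/2})_{\#}\Pi=\delta_x$, so that $\Pi$-almost every geodesic passes through $x$ at time $1/2$. Taking two such geodesics and interchanging their branches at $x$ produces, by $c$-cyclical monotonicity of the support, two further geodesics that coincide on a nondegenerate subinterval yet have distinct endpoints, contradicting that $X$ is non-branching; hence $\mu_0=\mu_1=\delta_x$. Here the non-branching hypothesis is exactly what is needed, and its failure would allow non-trivial geodesics to refocus at a point.

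The reverse inclusion, that every non-Dirac measure is a midpoint of two distinct measures, is where I expect the real difficulty, and where good transport behavior and qualitative non-degeneracy enter. Given $\mu$ with at least two points in its support, the goal is to construct a genuinely nondegenerate $W_2$-geodesic having $\mu$ as its midpoint. I would use that qualitative non-degeneracy forces $(X,d_X)$ to be a doubling geodesic space of full support, so that mass concentrated in disjoint balls around two support points can be displaced in opposite directions along $X$-geodesics, while good transport behavior guarantees that the resulting displacement interpolation is optimal and that its two endpoints are genuinely distinct. Making this construction work uniformly over all non-Dirac $\mu$, and verifying that the endpoints can never be forced to coincide, is the main obstacle; once it is in place, the characterization of extreme points, and with it the proposition, follows.
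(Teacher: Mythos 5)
The paper never proves this proposition: it is imported verbatim as Corollary~3.8 of Santos-Rodr\'iguez \cite{santos2021isometric}, so there is no internal argument to compare against and your attempt must be judged on its own. Your opening reduction --- that it suffices to show $\Phi$ carries the Dirac masses of $X$ onto those of $Y$ --- is sound and is indeed the strategy behind the cited result. The genuine gap is in the metric invariant you use to detect Dirac masses. The characterization ``$\mu$ is extreme iff it is not a midpoint of any pair of distinct measures'' is false: whenever $x$ is a metric midpoint of two distinct points $y,z\in X$ (which happens for \emph{every} point of a closed Riemannian manifold, and for all but an exceptional set of points of a compact geodesic space), $\delta_x$ is a midpoint of the distinct measures $\delta_y$ and $\delta_z$, hence not extreme in your sense. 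Your non-branching argument, pushed through correctly, does not produce the contradiction you claim: swapping branches at $x$ and invoking cyclical monotonicity together with non-branching shows only that the optimal dynamical plan is concentrated on a \emph{single} geodesic through $x$, i.e.\ that $\mu_0$ and $\mu_1$ are themselves Dirac masses at the two endpoints --- it does not force $\mu_0=\mu_1$. So the set of your ``extreme'' measures is typically empty and cannot be used to transport $\Delta(X)$ onto $\Delta(Y)$.

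A symptom of the problem is that your first inclusion uses only non-branching, whereas the hypotheses of good transport behavior and qualitative non-degeneracy --- which are exactly what make the Dirac set isometry-invariant in \cite{santos2021isometric} --- never enter until the second inclusion, which you yourself leave as an acknowledged obstacle. Even if the midpoint criterion were replaced by a correct one, that second half (every non-Dirac measure admits a nondegenerate geodesic through it with distinct endpoints, constructed via the doubling property and the existence of optimal maps) is only sketched, so the argument would remain incomplete. As it stands, the proof does not establish the proposition.
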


In fact, Santos Rodriguez in \cite{santos2021isometric} showed the above proposition for Wasserstein spaces with finite $p$-moments, where $p\in (1,\infty)$. 

\section{Proofs}\label{proof section}

In what follows, we assume that the maps are sufficiently regular. Indeed, this is possible because if $X$ is a compact metric space then $\Isom(X)$ is compact with the compact-open topology. Moreover, the compact open topology on $\Isom(X)$ can be metrized with the uniform metric.  One can always replace $\epsilon$-isometries between compact metric spaces by almost-isometries that are measurable.  In particular, we have the following simple lemma.

\begin{lemma}
Assume $X$ and $Y$ are compact metric spaces and $G_{X}$ and $G_{Y}$ are closed subgroups of $\Isom(X)$ and $\Isom(Y)$ respectively. Equip $G_{X}$ and $G_{Y}$ with the uniform metrics $d_{G_X}$ and $d_{G_Y}$ respectively. If $(f\colon X\rightarrow Y,\theta\colon G_{X}\rightarrow G_{Y},\psi\colon G_{Y}\rightarrow G_{X})$ is an $\epsilon$-equivariant approximation, then, $f$, $\theta$ and $\psi$ can be chosen to be measurable. 
\end{lemma}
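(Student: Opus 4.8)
The plan is to reduce the measurability claim to a standard fact about maps between compact metric spaces, namely that any $\epsilon$-approximation can be replaced by a genuinely measurable one at the cost of a controlled increase in the error. The statement is really a convenience lemma, so I would keep it short and focus on producing measurable representatives for each of the three maps $f$, $\theta$, $\psi$ without destroying the equivariance estimates.

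First I would recall the definition of an $\epsilon$-equivariant approximation: $f\colon X\to Y$ is an $\epsilon$-approximation, and $\theta,\psi$ relate $G_X,G_Y$ so that the triple is almost compatible with the actions (i.e.\ $d_Y(f(g\cdot x),\theta(g)\cdot f(x))$ and the symmetric expression for $\psi$ are $\le\epsilon$, up to the usual constant). The key elementary observation is that for a map between compact metric spaces, being an $\epsilon$-approximation depends only on a countable amount of data, so one may discretize. Concretely, I would fix a finite $\delta$-net $\{x_1,\dots,x_m\}$ in $X$ (with $\delta$ small relative to $\epsilon$) and cover $X$ by the Borel sets $B_j$ obtained from the net cells (e.g.\ Voronoi-type regions made disjoint by an arbitrary but fixed ordering), then define $\tilde f$ to be constant on each $B_j$, equal to $f(x_j)$. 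Since $f$ moves points by at most the distortion of $f$ over a cell of radius $\delta$, and since $\delta$ can be chosen as small as we like, $\tilde f$ is a Borel-measurable map that remains an $\epsilon'$-approximation with $\epsilon'$ as close to $\epsilon$ as desired; taking a fresh $\epsilon$ in the hypothesis this is harmless for the definitional statement.

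Second I would apply the same discretization to $\theta$ and $\psi$. Here I use the remark preceding the lemma: $G_X$ and $G_Y$ are compact in the uniform metric, so $(G_X,d_{G_X})$ and $(G_Y,d_{G_Y})$ are themselves compact metric spaces, and the net-and-cell construction applies verbatim to produce Borel-measurable $\tilde\theta\colon G_X\to G_Y$ and $\tilde\psi\colon G_Y\to G_X$ that are uniformly close to $\theta,\psi$. Because the equivariance conditions are stated as inequalities with slack $\epsilon$, replacing $f,\theta,\psi$ by maps uniformly within $\delta$ of them only perturbs each inequality by an additive $O(\delta)$ (one uses that the isometries act $1$-Lipschitzly, so $d_Y(\tilde\theta(g)\cdot y,\theta(g)\cdot y)\le d_{G_Y}(\tilde\theta(g),\theta(g))$), and a suitable choice of $\delta$ absorbs this into the original $\epsilon$.

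The main obstacle, such as it is, is bookkeeping rather than anything deep: one must make the Borel cells genuinely disjoint and the piecewise-constant maps genuinely measurable, and then verify that all of the defining inequalities of an equivariant $\epsilon$-approximation (almost-surjectivity of each map and the two compatibility estimates for $\theta$ and $\psi$) survive the perturbation simultaneously. I would organize this by first fixing a single $\delta=\delta(\epsilon)$ that works for all three discretizations at once, then checking each condition in turn; since each condition is an inequality with built-in slack and the perturbations are uniformly $O(\delta)$, no condition is in danger. Thus the measurable representatives $(\tilde f,\tilde\theta,\tilde\psi)$ form an $\epsilon$-equivariant approximation, proving the lemma.
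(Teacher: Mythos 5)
Your argument is essentially a self-contained version of what the paper outsources to citations: the paper's proof first invokes Corollary 3.4 of \cite{alattar2023stability} to record that $\theta$ is a $5\epsilon$-approximation and $\psi$ an almost inverse with respect to the uniform metrics, and then cites Lemma 4.1 of \cite{Chung} for the existence of measurable approximations close to the originals; your net-and-Borel-cell discretization is precisely the standard proof of that cited lemma, so the route is the same in substance. One point of bookkeeping in your write-up is not quite right, and it is exactly the point the paper's first citation supplies. When you discretize $\theta$ by setting $\tilde\theta(g)=\theta(g_j)$ for $g$ in the cell of the net point $g_j$, the quantity $d_{G_Y}(\tilde\theta(g),\theta(g))=d_{G_Y}(\theta(g_j),\theta(g))$ is \emph{not} $O(\delta)$ a priori: in the usual definition of an equivariant approximation $\theta$ is just a map satisfying the compatibility inequality, with no stated modulus of continuity for the uniform metrics. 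One must first show that $d_{G_X}(g,g_j)\le\delta$ forces $d_{G_Y}(\theta(g),\theta(g_j))\le 5\epsilon+\delta$ (by comparing both to $f$ on the $\epsilon$-dense image of $f$ and using that isometries are $1$-Lipschitz) --- this is the content of the cited Corollary 3.4. Consequently the perturbation of the compatibility inequalities is $O(\epsilon)+O(\delta)$, not $O(\delta)$, and cannot be ``absorbed into the original $\epsilon$'' by shrinking $\delta$: the measurable replacements are $C\epsilon$-equivariant approximations for a universal constant $C$, not $\epsilon'$-approximations with $\epsilon'$ arbitrarily close to $\epsilon$. This degradation of constants is harmless for the lemma as stated (the paper's own proof incurs it too), but you should either prove the almost-isometry property of $\theta$ and $\psi$ for the uniform metrics before discretizing them, or weaken your claim about the size of the perturbation.
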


\begin{proof}
By Corollary 3.4 in \cite{alattar2023stability}, it follows that $\theta$ is an $5\epsilon$-approximation. Furthermore, observe that $\psi$ is an almost inverse to $\theta$. In particular, for $\lambda\in G_{Y}$, and $g\in G_{X}$, one has $d_{G_Y}(\theta(\psi(\lambda)),\lambda)\leq 4\epsilon$ and $d_{G_{X}}(\psi(\theta(g)),g)\leq 4\epsilon$. Hence, as shown in Lemma 4.1 in \cite{Chung},  one can obtain measurable approximations $f_1\colon X\rightarrow Y$, $\theta_1\colon G_{X}\rightarrow G_{Y}$ and $\psi_1\colon G_{Y}\rightarrow G_{X}$ that remain close to $f$, $\theta$ and $\psi$ respectively. Hence the result follows.
\end{proof}

\subsection*{Proof of Theorem \ref{general theorem}}
The implication $(2)\implies (1)$ is well known (see Corollary 4.3 in \cite{LottandVillani}). Let us show $(1)\implies (2)$. Assume for the sake of obtaining a contradiction that $\mathbb{P}_2(X_i)\rightarrow_{GH}\mathbb{P}_2(X)$ holds but $X_i\rightarrow_{GH}X$ does not hold. Hence, there exists a $\delta>0$ such that (up to a subsequence), $d_{GH}(X_i,X)\geq \delta >0$ for all $i$. Since the class $\mathfrak{X}$ is pre-compact, and by our assumptions, it follows that up to a further subsequence, $X_i\rightarrow_{GH} X'$, where $X'$ is a compact non-branching metric space. Hence, $\mathbb{P}_2(X_i)\rightarrow_{GH}\mathbb{P}_2(X')$. By assumption, $\mathbb{P}_2(X)$ and $\mathbb{P}_2(X')$ are isometric. Now, by Proposition \ref{Jaime's Result}, $X$ and $X'$ are therefore isometric, which is a contradiction. \qed

\subsection*{Proof of Theorem \ref{theorem A}} The proof is similar to the proof of Theorem \ref{general theorem}. However, we give details for the convenience of the reader.
The implications $(2)\implies (1)$ and $(3)\implies (1)$ are clear. The implication $(2)\implies (3)$ follows at once from the following observation. Let $f_i \colon X_i\rightarrow X$ be $\epsilon_i$-approximations by measurable maps. Then each $f_i$ induces an $\tilde{\epsilon_i}$-approximation $(f_i)_{\#}\colon\mathbb{P}_2(X_i)\rightarrow \mathbb{P}_2(X)$, with $\tilde{\epsilon_i}\rightarrow 0$, and such that, on Dirac deltas, $(f_i)_{\#}$ is a Hausdorff approximation. Hence, one may use Proposition \ref{new characterization} to establish this implication. We shall now show $(1)\implies (2)$.

We must prove that $X_i\rightarrow_{GH} X$. Assume otherwise. Hence, there exists $\delta>0$ such that, up to a subsequence, $d_{GH}(X_i,X)\geq \delta>0$ for all $i$. Here, $d_{GH}$ denotes the Gromov--Hausdorff distance. Since $\mathcal{A}^{\leq}(n,K,D)$ is, with respect to $d_{GH}$, compact  \cite{burago1992ad}, there exists a compact Alexandrov space $X'$ such that, up to a further subsequence, $X_i\rightarrow_{GH}X'$.  Hence, $\mathbb{P}_2(X_i)\rightarrow_{GH}\mathbb{P}_2(X')$. By assumption, $\mathbb{P}_2(X_i)\rightarrow_{GH}\mathbb{P}_2(X)$. Therefore, $\mathbb{P}_2(X')$ and $\mathbb{P}_2(X)$ are isometric. Alexandrov spaces have good transport behavior, and on them, the Hausdorff measure is qualitatively non-degenerate, therefore $X$ and $X'$ are isometric. Hence we have $X_i\rightarrow_{GH}X$.  
\qed

\subsection*{Proof of Corollary~\ref{corollary1}} The result follows from Perelman's stability Theorem. Note that if $f\colon X\rightarrow Y$ is a map, then $f_{\#}$ is continuous if and only if $f$ is continuous (see Remark 4.14 in \cite{villani2009optimal}).
\qed

\subsection*{Proof of Corollary \ref{corollary 2}}
We proceed by contradiction. Assume we have a sequence $(\mathbb{P}_2(X_i))_{i\in \mathbb{N}}$ of topologically pairwise inequivalent Wasserstein spaces in $W_2(n,K,D,v)$. Then, since $\mathcal{A}(n,K,D,v)$ is precompact and we have volume bounded below by $v>0$, there exists a compact $n$-dimensional Alexandrov space $X$ such that (up to a subsequence), $X_i\rightarrow_{GH}X$. Hence, $\mathbb{P}_2(X_i)\rightarrow_{GH} \mathbb{P}_2(X)$. Therefore, by Corollary \ref{corollary1}, for all large $i$, $\mathbb{P}_2(X_i)$ and $\mathbb{P}_2(X)$ are homeomorphic, which is a contradiction.

\qed

\subsection*{Proof of Corollary \ref{RCD corollary}}
This follows in an analogous manner as in the proof of Corollary \ref{corollary1}. However, one uses the result in \cite{HondaandPeng} instead of Perelman's stability theorem.
\qed
\\


Now we will prove Theorem \ref{theorem E}. First, we need two lemmas. 

\begin{lemma}
\label{pro: inducing-equivariant-convergence-wasserstein-spaces}
Let $X$ and $Y$ be compact metric spaces and $G_{X}$ and $G_{Y}$ are closed subgroups of $\Isom(X)$ and $\Isom(Y)$ respectively. An $\epsilon$-equivariant approximation, $(f,\theta,\psi)$ between $(X,G_{X})$ and $(Y,G_{Y})$  induces an $\tilde{\epsilon}$-equivariant approximation $(f_{\#},\theta_{\#},\psi_{\#})$ between $(\mathbb{P}_2(X),G_{X}^{\#})$ and $(\mathbb{P}_2(Y),G_{Y}^{\#})$, where  $\tilde{\epsilon}$ tends to $0$ as $\epsilon$ tends to $0$.
\end{lemma}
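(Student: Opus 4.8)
The plan is to show that an $\epsilon$-equivariant approximation $(f,\theta,\psi)$ between the base spaces pushes forward, coordinatewise, to an equivariant approximation between the Wasserstein spaces, with all error controlled by the stability of the push-forward operation under the $W_2$ metric. Concretely, I would define $f_\# \colon \mathbb{P}_2(X) \to \mathbb{P}_2(Y)$ to be the usual push-forward of measures (using the measurable representative of $f$ guaranteed by the lemma on measurable approximations), and define $\theta_\# \colon G_X^\# \to G_Y^\#$ by sending the induced isometry $g_\#$ to $\theta(g)_\#$, and likewise $\psi_\#(\lambda_\#) = \psi(\lambda)_\#$. These maps are well-defined precisely because the correspondence $g \mapsto g_\#$ between $\Isom(X)$ and $G_X^\# \subseteq \Isom(\mathbb{P}_2(X))$ is a bijection onto its image (indeed a topological group isomorphism onto $G_X^\#$ by definition of the latter as the image).

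The three conditions to verify are that $f_\#$ is almost distance-preserving and almost surjective, that $\theta_\#$ and $\psi_\#$ are almost inverse and almost distance-preserving, and that the three maps are equivariantly compatible. First I would estimate the distance distortion of $f_\#$: given $\mu,\nu \in \mathbb{P}_2(X)$ with an optimal coupling $\pi$, the coupling $(f\times f)_\#\pi$ between $f_\#\mu$ and $f_\#\nu$ witnesses $W_2(f_\#\mu, f_\#\nu)^2 \leq \int d_Y(f(x),f(x'))^2\, d\pi(x,x')$, and since $f$ is an $\epsilon$-approximation the integrand is at most $(d_X(x,x')+\epsilon)^2$, giving $W_2(f_\#\mu,f_\#\nu) \leq W_2(\mu,\nu) + C\epsilon$ (using the diameter bound $D$ to absorb the cross term); the reverse inequality follows symmetrically using $\psi$. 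Almost surjectivity of $f_\#$ onto $\mathbb{P}_2(Y)$ follows since $f$ is $\epsilon$-surjective and push-forward is $W_2$-continuous with the stated modulus, so every target measure is within $\tilde\epsilon$ of a push-forward (approximate the target by finitely supported measures, hit each atom's location within $\epsilon$). The estimates for $\theta_\#,\psi_\#$ under the uniform metric on the isometry groups follow from the same push-forward stability, since $d_{G_Y}(\theta(g),\theta(g')) $ controls the uniform distance between $\theta(g)_\#$ and $\theta(g')_\#$ on $\mathbb{P}_2(Y)$.

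For the equivariance compatibility I would check that $f_\#$ intertwines the actions up to $\tilde\epsilon$: for $g \in G_X$ and $\mu \in \mathbb{P}_2(X)$ one has $f_\#(g_\# \mu) = (f \circ g)_\# \mu$ and $\theta(g)_\#(f_\# \mu) = (\theta(g)\circ f)_\#\mu$, and the uniform closeness $d_Y(f(g(x)), \theta(g)(f(x))) \leq \epsilon$ coming from the equivariance of $(f,\theta,\psi)$ lifts through push-forward stability to $W_2(f_\#(g_\#\mu), \theta(g)_\#(f_\#\mu)) \leq \epsilon$. The same argument handles the $\psi$-side.

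The main obstacle I anticipate is not any single estimate but the uniform control of all constants: I must ensure the final $\tilde\epsilon$ depends only on $\epsilon$ (and the fixed diameter bound $D$) and tends to $0$ with $\epsilon$, across all three maps and the equivariance relations simultaneously. The cross-term when squaring $(d_X(x,x')+\epsilon)^2$ is where the diameter bound enters, and the continuity modulus of push-forward on the isometry groups under the uniform metric is where I would be most careful — specifically verifying that the uniform distance between $g_\#$ and $h_\#$ is controlled by $\sup_x d(g(x),h(x))$ via the coupling argument, uniformly over all measures. Once push-forward stability is packaged as a single estimate $W_2(\phi_\#\mu, \eta_\#\mu) \leq \sup_x d_Y(\phi(x),\eta(x))$, the remaining verifications are routine applications of it.
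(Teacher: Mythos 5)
Your proposal is correct and follows essentially the same route as the paper: define $\theta_\#$ and $\psi_\#$ by push-forward, use the single coupling estimate $W_2(\phi_\#\mu,\eta_\#\mu)\le\sup_x d_Y(\phi(x),\eta(x))$ to get the equivariance compatibility, and establish that $f_\#$ is an $\tilde\epsilon$-approximation (the paper simply cites Corollary 4.3 of Lott--Villani for this last step, with $\tilde\epsilon=6\epsilon+\sqrt{\epsilon(2\diam(Y)+\epsilon)}$, rather than redoing the coupling argument). One small slip: $\psi$ maps $G_Y\to G_X$ and is not an approximate inverse of $f$, so the reverse distance inequality for $f_\#$ should come from the two-sided distortion bound on $f$ (or an approximate inverse of $f$ as a Gromov--Hausdorff approximation), not from $\psi$; this does not affect the argument.
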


\begin{proof}
Define $\theta_{\#}$ and $\psi_{\#}$ by the rules $\theta_{\#}(g_{\#})=(\theta(g))_{\#}$  and $\psi_{\#}(\lambda_{\#})=(\psi(\lambda))_{\#}$. By Corollary 4.3 in \cite{LottandVillani}, the map $f_{\#}$ is an $\tilde{\epsilon}$-approximation, where 
\[\tilde{\epsilon}=6\epsilon+\sqrt{\epsilon(2\diam(Y)+\epsilon)}.\]

Let $\mu\in \mathbb{P}_2(X)$.  Then,
\[
W_2^2(\theta_{\#}(g_{\#})(f_{\#}\mu),f_{\#}(g_{\#}\mu))\leq \int d^2(\theta(g)(f(x)),f(gx)) d\mu \leq \epsilon^2.
\]
The last inequality is similar.
\end{proof}

As a corollary, we have the following result.

\begin{corollary}
Let $X_i$ and $X$ be compact metric spaces. If $(X_i,G_i)\rightarrow_{eGH}(X,G)$, then \linebreak $(\mathbb{P}_2(X_i),G_i^{\#})\rightarrow_{eGH}(\mathbb{P}_2(X),G^{\#})$.
\end{corollary}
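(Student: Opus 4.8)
The plan is to derive this corollary directly from Lemma~\ref{pro: inducing-equivariant-convergence-wasserstein-spaces} by applying it term-by-term along the converging sequence. The hypothesis $(X_i,G_i)\rightarrow_{eGH}(X,G)$ unpacks, by definition, to a sequence of positive reals $\{\epsilon_i\}_{i\in\mathbb{N}}$ with $\epsilon_i\rightarrow 0$ together with $\epsilon_i$-equivariant approximations $(f_i,\theta_i,\psi_i)$ between $(X_i,G_i)$ and $(X,G)$. The goal is to produce, from these data, a single sequence of equivariant approximations between $(\mathbb{P}_2(X_i),G_i^{\#})$ and $(\mathbb{P}_2(X),G^{\#})$ whose error parameters tend to $0$.

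First I would apply Lemma~\ref{pro: inducing-equivariant-convergence-wasserstein-spaces} to each triple $(f_i,\theta_i,\psi_i)$ separately. For each fixed $i$, the lemma yields an $\tilde{\epsilon}_i$-equivariant approximation $((f_i)_{\#},(\theta_i)_{\#},(\psi_i)_{\#})$ between $(\mathbb{P}_2(X_i),G_i^{\#})$ and $(\mathbb{P}_2(X),G^{\#})$, where the explicit bound from the lemma's proof gives
\[
\tilde{\epsilon}_i = 6\epsilon_i + \sqrt{\epsilon_i\,(2\diam(X)+\epsilon_i)}.
\]
Since the spaces $X_i$ and $X$ are compact, $\diam(X)$ is a fixed finite constant independent of $i$, so the only $i$-dependence on the right-hand side is through $\epsilon_i$ itself.

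Next I would verify that $\tilde{\epsilon}_i\rightarrow 0$. Because $\epsilon_i\rightarrow 0$ and $\diam(X)$ is bounded, the first summand $6\epsilon_i$ tends to $0$, and the radicand $\epsilon_i(2\diam(X)+\epsilon_i)$ tends to $0$, so its square root does as well; hence $\tilde{\epsilon}_i\rightarrow 0$. This exhibits precisely the data required by the definition of equivariant Gromov--Hausdorff convergence: a null sequence $\{\tilde{\epsilon}_i\}$ of error parameters together with $\tilde{\epsilon}_i$-equivariant approximations between the Wasserstein spaces with their induced isometry subgroups. Therefore $(\mathbb{P}_2(X_i),G_i^{\#})\rightarrow_{eGH}(\mathbb{P}_2(X),G^{\#})$.

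There is essentially no serious obstacle here, since the corollary is a formal consequence of the lemma; the only point requiring a moment's care is confirming that the passage $\epsilon\mapsto\tilde{\epsilon}$ is uniform enough to preserve the limit, which is guaranteed by the uniform diameter bound coming from compactness. I would also note in passing that the induced subgroup maps are well defined: the assignments $(\theta_i)_{\#}(g_{\#})=(\theta_i(g))_{\#}$ and $(\psi_i)_{\#}(\lambda_{\#})=(\psi_i(\lambda))_{\#}$ used in the lemma respect the push-forward identification $G\mapsto G^{\#}$, so no additional compatibility needs to be checked.
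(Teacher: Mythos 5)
Your proof is correct and follows exactly the route the paper intends: the paper states this as an immediate consequence of Lemma~\ref{pro: inducing-equivariant-convergence-wasserstein-spaces} without further argument, and you have simply made explicit the term-by-term application of that lemma together with the observation that $\tilde{\epsilon}_i=6\epsilon_i+\sqrt{\epsilon_i(2\diam(X)+\epsilon_i)}\rightarrow 0$ because the limit space $X$ is fixed. Nothing is missing.
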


We will also need the following lemma, an equivariant analogue of Proposition \ref{Jaime's Result}.

\begin{lemma}
\label{equivariant isometry induced from wasserstein}

Let $X$ and $Y$ be positively curved closed  Riemannian manifolds and let $G_{X}$ and $G_{Y}$ be closed subgroups of $\Isom(X)$ and $\Isom(Y)$, respectively. If $(\mathbb{P}_2(X),G_{X}^{\#})$ and $(\mathbb{P}_2(Y),G_{Y}^{\#})$ are equivariantly isometric then so are $(X,G_{X})$ and $(Y,G_{Y})$.
\end{lemma}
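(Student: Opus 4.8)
The plan is to reduce the equivariant statement to the \emph{non-equivariant} isometric rigidity that is already available for positively curved closed Riemannian manifolds, and then to read off the group-level compatibility purely from the behaviour on Dirac deltas. By definition, an equivariant isometry between $(\mathbb{P}_2(X),G_X^{\#})$ and $(\mathbb{P}_2(Y),G_Y^{\#})$ consists of an isometry $\Phi\colon\mathbb{P}_2(X)\to\mathbb{P}_2(Y)$ together with a group isomorphism $\chi\colon G_X^{\#}\to G_Y^{\#}$ satisfying $\Phi(g_{\#}\mu)=\chi(g_{\#})(\Phi(\mu))$ for every $g_{\#}\in G_X^{\#}$ and $\mu\in\mathbb{P}_2(X)$. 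The whole argument hinges on upgrading $\Phi$ to a map induced from the base.

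First I would produce a base isometry $f$ with $\Phi=f_{\#}$. Since $X$ and $Y$ are positively curved closed Riemannian manifolds, they are non-branching, their Hausdorff/volume measures are qualitatively non-degenerate, and they have good transport behavior; hence Proposition~\ref{Jaime's Result} already furnishes an isometry $h\colon X\to Y$. Then $\Phi\circ(h_{\#})^{-1}$ is a \emph{self}-isometry of $\mathbb{P}_2(Y)$, so by the isometric rigidity of positively curved closed Riemannian manifolds \cite{santos2021isometric} it equals $k_{\#}$ for some $k\in\Isom(Y)$. Setting $f=k\circ h$ gives an isometry $f\colon X\to Y$ with $\Phi=f_{\#}$. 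In parallel, isometric rigidity makes the push-forward map $\lambda\mapsto\lambda_{\#}$ a group isomorphism $\Isom(Y)\cong\Isom(\mathbb{P}_2(Y))$ (it is a homomorphism by functoriality of push-forward, injective by evaluation on Dirac deltas, and surjective by rigidity), and similarly for $X$. Consequently $\chi$ transports to an isomorphism $\bar{\chi}\colon G_X\to G_Y$ characterized by $\chi(g_{\#})=(\bar{\chi}(g))_{\#}$.

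Finally I would substitute $\Phi=f_{\#}$ into the equivariance relation. This yields the identity $(f\circ g)_{\#}=(\bar{\chi}(g)\circ f)_{\#}$ of maps $\mathbb{P}_2(X)\to\mathbb{P}_2(Y)$; evaluating on Dirac deltas $\delta_x$ forces $f(g(x))=\bar{\chi}(g)(f(x))$ for all $x\in X$, i.e. $\bar{\chi}(g)=f\,g\,f^{-1}$. Since $\bar{\chi}$ is onto $G_Y$, conjugation by $f$ carries $G_X$ isomorphically onto $G_Y$, so $f$ together with the isomorphism $g\mapsto f g f^{-1}$ is precisely an equivariant isometry of $(X,G_X)$ and $(Y,G_Y)$.

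I expect the genuine obstacle to be the very first step, namely promoting the abstract isometry $\Phi$ to one of the form $f_{\#}$: this is exactly where the positive curvature hypothesis is used, since it guarantees isometric rigidity of both base spaces. Everything downstream—identifying $\bar{\chi}$ and extracting $\bar{\chi}(g)=fgf^{-1}$—is formal once $\Phi$ is known to be induced by a base-space isometry and once push-forward is known to identify the isometry groups. A minor point to verify carefully is that the image subgroup $G_Y^{\#}$ corresponds bijectively to $G_Y$ under push-forward, so that $\bar{\chi}$ is well defined; this again follows from rigidity and the injectivity of $\lambda\mapsto\lambda_{\#}$.
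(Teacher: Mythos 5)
Your proof is correct and follows essentially the same strategy as the paper's: reduce to the base space via Santos Rodr\'iguez's rigidity results and then read off the group compatibility by evaluating the equivariance relation on Dirac deltas. The only (harmless) difference is in how the base isometry is produced --- the paper restricts $\Phi$ to Dirac deltas after noting that $\Phi$ maps Dirac deltas onto Dirac deltas, whereas you first invoke Proposition~\ref{Jaime's Result} to obtain some isometry $h\colon X\to Y$ and then upgrade $\Phi$ to $f_{\#}$ by applying isometric rigidity to the self-isometry $\Phi\circ(h_{\#})^{-1}$ of $\mathbb{P}_2(Y)$, a slightly longer route that yields the stronger conclusion $\Phi=f_{\#}$ while using only the stated results rather than the Dirac-delta preservation established inside their proofs.
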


\begin{proof}
Let $\Phi\colon \mathbb{P}_2(X)\rightarrow \mathbb{P}_2(Y)$ be an isometry and let $\Theta:G_{X}^{\#}\rightarrow G_{Y}^{\#}$ be an isomorphism such that for any $g\in \Isom(X)$, one has $\Phi\circ g_{\#}=\Theta(g_{\#})\circ \Phi$. Then, from Corollary 3.8 in \cite{santos2021isometric}, it follows that $\Phi$ sends Dirac deltas onto Dirac deltas. Thus, there exists an isometry $F\colon X\rightarrow Y$ such that $F_{\#}$ and $\Phi$ agree on Dirac deltas. Moreover, there exists an isomorphism $\theta\colon G_{X}\rightarrow G_{Y}$ such that for $g\in G_{X}$, $\theta(g)_{\#}=\Theta(g_{\#})$. Thus it follows that $F\circ g=\theta(g)\circ F$.
\end{proof}

We are now ready to prove theorem \ref{theorem E}.

\subsection*{Proof of Theorem \ref{theorem E}}
The implication $(2)\implies (1)$ follows from Lemma \ref{pro: inducing-equivariant-convergence-wasserstein-spaces}. The implication $(2)\implies (3)$ is trivial. Now we shall verify the implication $(1)\implies (2).$ Since $X_i$ and $X$ are isometrically rigid, there exists unique closed subgroups $G_i$ of $\Isom(X_i)$ and $G$ of $\Isom(X)$ such that $G_i^{\#}=H_i$ and $G^{\#}=H$. It now remains to verify that $(X_i,G_i)\rightarrow_{eGH}(X,G)$. As usual, assume otherwise. So, there exists a $\delta>0$ such that, up to a subsequence, $d_{eGH}((X_i,G_i),(X,G))\geq \delta$ for all $i$. Here, $d_{eGH}$ denotes the equivariant Gromov--Hausdorff distance. Since $\{X_i\}_{i}$ forms a family of compact Alexandrov spaces with curvature uniformly bounded below, dimension uniformly bounded above, and uniform upper diameter bound, it follows that there exists a compact Alexandrov space $X'$, such that, up to a subsequence, $X_i\rightarrow_{GH}X'$. Up to a further subsequence, there exists a closed subgroup $G'$ of $\Isom(X')$ such that $(X_i,G_i)\rightarrow_{eGH}(X',G')$. Hence, by Lemma \ref{pro: inducing-equivariant-convergence-wasserstein-spaces}, it follows that $(\mathbb{P}_2(X_i),H_i)\rightarrow_{eGH}(\mathbb{P}_2(X'),(G')^{\#})$. Therefore, $(\mathbb{P}_2(X'),(G')^{\#})$ and $(\mathbb{P}_2(X),G^{\#})$ are equivariantly isometric. The previous lemma shows that $(X',G')$ and $(X,G)$ are equivariantly isometric, which is a contradiction.
\qed

\subsection*{Proof of Corollary \ref{corollary 3}}
Theorem \ref{theorem E} ensures there exists closed subgroups $G_i$ and $G$ such that $G_i^{\#}=H_i$, $G^{\#}=H$ and such that $(X_i,G_i)\rightarrow_{eGH}(X,G)$. Note that the isometric rigidity of the $X_i$ and $X$ ensures that $G_i$ and $G$ have the same dimension as $H_i$ and $H$. Hence the result follows from Theorem A in  \cite{alattar2023stability}.
\qed

\printbibliography

\end{document}